\documentclass[12pt, reqno,english,empty]{amsart}

\usepackage{array}

\usepackage{latexsym}
\usepackage{enumerate}
\usepackage{mathrsfs}
\usepackage{stmaryrd}
\usepackage{amsopn}
\usepackage{amsmath,amsthm,amssymb}
\usepackage{mathtools}
\usepackage{amsfonts}
\usepackage{soul}
\usepackage{amsbsy}
\usepackage{amscd,indentfirst,epsfig}
\usepackage{amsfonts,latexsym,verbatim,amsbsy}

\usepackage[colorlinks=true,linkcolor=blue,citecolor=blue,urlcolor=blue,breaklinks]{hyperref}

\usepackage[utf8]{inputenc}
\usepackage{mathrsfs}

\setlength{\oddsidemargin}{.5cm} \setlength{\evensidemargin}{.5cm}
\setlength{\textwidth}{16.0cm}
\setlength{\textheight}{21.5cm}

\def\R{\mathbb R}

\def \t {\tau}

\def\d{\,\mathrm{d}}
\def \ddt{\frac{\mathrm{d}}{\mathrm{d}t}}

\def\z{\zeta}

\newtheorem{theo}{Theorem}[section]
\newtheorem{prop}[theo]{Proposition}

\newtheorem{nb}[theo]{Remark}
\def \leq {\leqslant}
\def \geq {\geqslant}

\numberwithin{equation}{section}

\def\beq{\begin{equation}}
\def\eeq{\end{equation}}
\def\beqn{\begin{equation*}}
\def\eeqn{\end{equation*}}
\def\bea{\begin{eqnarray}}
\def\eea{\end{eqnarray}}
\def\bean{\begin{eqnarray*}}
\def\eean{\end{eqnarray*}}
\def\bary{\begin{array}}
\def\eary{\end{array}}

\usepackage{breakurl}
\usepackage{natbib}
\usepackage{url}

\newcommand{\const}{\mathsf{const}}
\newcommand{\add}{\mathsf{add}}
\newcommand{\mult}{\mathsf{mult}}

\DeclarePairedDelimiter\norm{\Vert}{\rVert}
\DeclarePairedDelimiter\lnorm{\llbracket}{\rrbracket}

\title[Contractivity for Smoluchowski]
{\textbf{Contractivity for Smoluchowski's coagulation equation with
    solvable kernels}}

\author{José A. Cañizo}
\address{José A. Cañizo. Departamento de Matemática Aplicada,
  Universidad de Granada. Av. Fuentenueva S/N, 18071 Granada, Spain.}
\email{canizo@ugr.es}

\author{Bertrand Lods}
\address{Bertrand Lods. Università degli Studi
  di Torino \& Collegio Carlo Alberto, Department of Economics and
  Statistics, Corso Unione Sovietica, 218/bis, 10134 Torino, Italy.}
\email{bertrand.lods@unito.it}

\author{Sebastian Throm}

\address{Sebastian Throm. Departamento de Matemática Aplicada,
  Universidad de Granada. Av. Fuentenueva S/N, 18071 Granada, Spain.}
\email{throm@correo.ugr.es}

\begin{document}

\begin{abstract}
  We show that the Smoluchowski coagulation equation with the solvable
  kernels $K(x,y)$ equal to $2$, $x+y$ or $xy$ is contractive in
  suitable Laplace norms. In particular, this proves exponential
  convergence to a self-similar profile in these norms. These results
  are parallel to similar properties of Maxwell models for
  Boltzmann-type equations, and extend already existing results on
  exponential convergence to self-similarity for Smoluchowski's
  coagulation equation.
\end{abstract}

\maketitle

\section{Introduction}

Smoluchowski's coagulation equation describes the growth of clusters
in systems of merging particles in a broad range of
applications (see \cite{BLL} for general references on the matter). Precisely, the equation is given by
\begin{equation}\label{eq:Smol}\begin{split}
  \partial_{t}n(t,x)&=
  \frac{1}{2}\int_{0}^{x}K(x-y,y)n(t,x-y)n(t,y)\d y
  -\int_{0}^{\infty}K(x,y)n(t,x)n(t,y)\d y\\
  &=:\mathcal{C}(n(t,\cdot),n(t,\cdot))(x), \qquad x >0\end{split}
\end{equation}
where $n(t,x)$ is the density of clusters of size $x > 0$
at time $t \geq 0$ and the integral kernel $K(x,y) \geq 0$ describes
the rate at which clusters of sizes $x$ and $y$ merge. In
applications, the latter function is usually homogeneous of a certain
degree $\gamma$, i.e.~$K(ax,ay)=a^{\gamma}K(x,y)$ for all $a,
x,y>0$. In this paper we are concerned with the so-called
\emph{solvable kernels}: 
\begin{align*}
K(x,y)&=2 \qquad &\text{(constant kernel),}\\
K(x,y)&=x+y \qquad &\text{(additive kernel),}\\
K(x,y)&=xy \qquad  &\text{(multiplicative kernel).}
\end{align*}

For these
kernels an explicit solution to \eqref{eq:Smol} may be found by using
the Laplace transform, see \cite{MeP04,BLL}.

In this note we show that for these kernels, Equation \eqref{eq:Smol} satisfies new contractivity
properties in suitable weak distances which we define below. When one
considers the usual change of scale to self-similar variables, we show
that
\begin{equation*}
  \| g_1(\tau, \cdot) - g_2(\tau, \cdot) \|
  \leq e^{-\lambda t} \| g_1(0, \cdot) - g_2(0, \cdot) \|,
\end{equation*}
where $g_1 = g_1(\tau,x)$, $g_2 = g_2(\tau,x)$ are obtained from two
finite-mass solutions to \eqref{eq:Smol} through the change of
variables, and $\|\cdot\|$ is a suitable weighted norm of the Laplace
transform of $g$. Precise statements are given at the end of this
introduction. In particular, this provides explicit exponential rates
of convergence towards self-similarity with respect to this norm,
always for solutions with finite mass. This exponential convergence
was already known in other norms since \cite{CMM10,Sri11}, and in
fact our arguments have a similar flavour to those in
\cite{Sri11}. 

What is remarkable is that the calculation involving
these Laplace-based distances is much simpler, especially for the additive
kernel, and yields contractivity of the whole flow, not just of the
distance to self-similarity. These distances are inspired by analogous
norms based on the Fourier transform which have been used in
\cite{CarrTo} to study several models related to the Boltzmann
equation with constant collision kernels (the \emph{Maxwell cases}),
and which to our knowledge have not been exploited in
proving the convergence to self-similarity for coagulation equations.

Let us give some background on Smoluchowski's equation in order
describe our results more precisely. An important property
of~\eqref{eq:Smol} is the (formal) conservation of the total mass
$$M_{1}[n(t)]=M_{1}[n(0)] \qquad \forall t \geq 0$$
where, for any nonnegative function $f\::\R_{+}\to \R$, we set
$$M_{\ell}[f]=\int_{0}^{\infty}x^{\ell}f(x)\d x, \qquad \ell \geq 0.$$ 
Indeed, multiplying~\eqref{eq:Smol} by $x$ and integrating over
$(0,\infty)$ we obtain that $\ddt M_{1}[n(t)]=0$ by formally
interchanging the order of integration. However, for homogeneity
degree $\gamma>1$ this procedure cannot be made rigorous and solutions
in fact lose mass after some finite time, a phenomenon which is known
as \emph{gelation}, see for example \citet{EMP02,BF14,MR858257}, and
\citet[Chapter 9]{BLL} for a thorough discussion of this topic. In
fact, one defines the \emph{gelation time} $T_{*}$ as
\begin{equation*}
 T_{*}=\inf\{t\geq 0\;| M_{1}(t)<M_{1}(0)\}.
\end{equation*}
If $\gamma\leq 1$ one sets $T_{*}=\infty$.

A well-known conjecture, known as the \emph{scaling hypothesis},
states that the behaviour of solutions $n$ to~\eqref{eq:Smol} is
self-similar as $t\to T_{*}$ (perhaps under additional conditions on
the initial datum). That is: there exists a \emph{self-similar
  profile} $\widehat{n}$, a scaling function $s(t)\to \infty$ as
$t\to T_{*}$ and a constant $\alpha>0$ such that
\begin{equation}
  \label{eq:scaling:hyp}
  \bigl(s(t)\bigr)^{\alpha}n(t,s(t)x)
  \longrightarrow \widehat{n}(x)\qquad \text{if }t\to T_{*}.
\end{equation}
However, this claim is still unproven for most kernels $K$. The only
cases where~\eqref{eq:scaling:hyp} is well understood are the solvable
kernels $K(x,y)=2$, $K(x,y)=x+y$ and $K(x,y)=xy$. In fact, for these
rate kernels, the scaling hypothesis was verified in \cite{MeP04},
i.e.\@ there exists one unique fast decaying self-similar profile (up
to normalisation) which attracts all solutions with initial condition
satisfying $\int_{0}^{\infty}x^{\gamma+1}n(0,x)\d x<\infty$ with
respect to weak convergence. A more precise statement can be found in
\cite{MeP04}, where in addition the existence of \emph{fat-tailed
  profiles} was established and the corresponding domains of
attraction were characterised. These proofs heavily rely on Laplace
transform methods which allow to compute solution formulas for
\eqref{eq:Smol} rather explicitly.

The results on the fast-decaying profiles were further improved in
\cite{MeP06} by showing that~\eqref{eq:scaling:hyp} also holds
uniformly with respect to $x\in \R_{+}$ (i.e. ~\eqref{eq:scaling:hyp}
is obtained in $L^{\infty}(\R^{+})$). For the constant kernel, the
scaling hypothesis was also verified by a different approach which
relies on spectral gap estimates for the linearised coagulation
operator (\cite{CMM10}), yielding explicit rates of convergence to
self-similarity. Moreover, \cite{Sri11} later provided rates of
convergence for the primitive of $n$, for all three solvable kernels,
using explicit calculations inspired in arguments related to the
central limit theorem in probability.

The only known full verification of the scaling hypothesis for a class
of non-solvable kernels was recently given in \cite{CaT19} where
bounded perturbations $K(x,y)=2+\epsilon W(x,y)$ with
$\norm{W}_{L^{\infty}}\leq 1$ and small $\epsilon$ have been
considered. The proof again relies on spectral gap estimates and
provides explicit rates of convergence towards the self-similar
profile.

\medskip Assume the coagulation kernel has homogeneity degree
$\gamma$. It is known \citep{MeP04,EsM06} that if a finite-mass
solution satisfies \eqref{eq:scaling:hyp} with a finite-mass profile
$\widehat{n}$, it must happen that $\alpha = 2$. Introducing a change of unknown 
\begin{equation*}
  g(\tau, z) := s(t)^2 n \Big( t, s(t) z \Big), \qquad t=t(\tau) >0
\end{equation*}
and using that, for a coagulation kernel homogeneous of degree $\gamma$, 
\begin{multline*}
s(t)^{2} \mathcal{C}(n(t),n(t))(x)\bigg\vert_{t=t(\tau),x=s(t(\tau))z}=s(t(\tau))^{\gamma-1}\left[\mathcal{C}(g(\tau),g(\tau)\right](z)\\
\text{ and } 
\quad  z\partial_{z}g(\tau,z)=\bigg(s^{2}(t)\,x\partial_{x}n(t,x)\bigg)\bigg\vert_{t=t(\tau),x=s(t(\tau))z}\end{multline*}
we obtain that, if $n(t,x)$ satisfies \eqref{eq:Smol},
\begin{multline*}
\partial_{\tau}g(\tau,z)\\*
=\frac{\d t(\tau)}{\d \tau}\bigg\{2\dot{s}(t)\,s(t)\,n(t,x) + s^{2}(t)\mathcal{C}(n(t),n(t)) + s(t)\dot{s}(t)x \partial_{x}n(t,x)\bigg\}\bigg\vert_{t=t(\tau),x=s(t(\tau))z}\\
=\frac{\d t(\tau)}{\d\tau}\bigg\{2\frac{\dot{s}(t(\tau))}{s(t(\tau))}g(\tau,z)+s^{\gamma-1}(t(\tau))\mathcal{C}(g(\tau),g(\tau))(z)+\frac{\dot{s}(t(\tau))}{s(t(\tau))}z\partial_{z}g(\tau,z)\bigg\}.
\end{multline*}
where $\dot{s}$ denotes the derivative with respect to the original
variable $t$.  Therefore, choosing $s(t)$ and $t(\tau)$ such that
\begin{equation}
\label{eq:tts}
\frac{\d t(\tau)}{\d\tau}\frac{\dot{s}(t(\tau))}{s(t(\tau))}=1, \qquad
\qquad \frac{\d
  t(\tau)}{\d\tau}s^{\gamma-1}(t(\tau))=\frac{1}{k},
\end{equation}
(for any $k > 0$ to be chosen later) we obtain that $g(\tau,z)$
satisfies the self-similar Smoluchowski equation
\begin{equation}\label{eq:gta}
  \partial_\tau g(\tau,z) =
  2 g(\tau,z) + z \partial_z g(\tau,z) + \frac{1}{k} \mathcal{C}(g(\tau), g(\tau))(z), \qquad z >0, \qquad \tau >0
\end{equation}
and the stationary solutions to this equation are the self-similar
profiles $\widehat{n}=\widehat{n}(z)$ appearing in
\eqref{eq:scaling:hyp}. We refer to \cite{MeP06} and \cite{BLL} for
details on this subject. After solving, \eqref{eq:tts} yields
$$t(\tau)=\frac{\tau}{k}\qquad \text{ and }  \quad s(t)=e^{kt} \qquad \text{ if $\gamma=1$}$$
whereas
$$t(\tau)=\frac{1}{k (1-\gamma)} \left(
  e^{(1-\gamma) \tau} - 1 \right), \qquad \qquad s(t)=(1 + k(1-\gamma)
t)^{\frac{1}{1-\gamma}} \qquad \text{if $\gamma \neq 1$}.$$ Notice
that $s(t(\tau))=e^{\tau}$. The gelation time
$T_*$ is then equal to $+\infty$ if $\gamma \leq 1$, and is finite for
$\gamma > 1$. The self-similar profile $\widehat{n} = \widehat{n}(z)$
must then satisfy the equation
\begin{equation}
  \label{eq:ss-profile-eq}
  2 \widehat{n} + z \partial_z \widehat{n} + \frac{1}{k} \mathcal{C}(\widehat{n}, \widehat{n}) = 0.
\end{equation}
 In the three cases
which concern us in this paper, and always considering finite-mass
solutions, this becomes the following:
\begin{enumerate}
\item For the constant case $K = 2$ (so homogeneity $\gamma = 0$), the
  value of $k$ is irrelevant (since the convergence
  \eqref{eq:scaling:hyp} holds for all $k > 0$). We choose then
  $k = 1$ and obtain
  \begin{equation}\label{eq:const:selfsim}
    t(\tau) = e^\tau - 1,
    \qquad g(\tau, z) := e^{2\tau} n(e^{\tau}-1, e^\tau z),
  \end{equation}
  which satisfies then
  \begin{equation}\label{eq:Smol:const:selfsim}
    \partial_\tau g =
    2 g + z \partial_z g + \mathcal{C}_{\const}(g, g)
  \end{equation}
where $\mathcal{C}_{\const}$ is the coagulation operator in \eqref{eq:Smol} for $K=2$. For this model, mass is conserved for solutions to \eqref{eq:Smol:add:selfsim}, i.e.\@ $M_{1}[g(\t)]=M_{1}[g(0)]$ for all $\t\geq 0$. Moreover, we have 
\begin{equation*}
 \dfrac{\d}{\d\tau}M_{0}[g(\tau)]=M_{0}[g(\tau)]\,\left(1-M_{0}[g(\tau)]\right)
\end{equation*}
and thus, rescaling such that $M_{0}[g(0)]=1$, also the moment of order zero is conserved, i.e.\@ in summary we get
$$\int_{0}^{\infty}g(\tau,z) \left[\begin{array}{c}1 \\z \end{array}\right]\d z=\int_{0}^{\infty}g(0,z) \left[\begin{array}{c}1 \\z\end{array}\right]\d z \qquad \forall \tau >0.$$

\item For the linear case $K(x,y) = x + y$ (for which $\gamma=1$) assuming the solution $n$
  has mass $1$ requires that $k = 2$ in order to have a
  solution. Hence
  \begin{equation}\label{eq:add:selfsim}
    t(\tau) = \tfrac{1}{2} \tau,
    \qquad g(\tau, z) := e^{2\tau} n\Big(\tfrac{1}{2}\tau, e^{\tau} z \Big),
  \end{equation}
  which satisfies then
  \begin{equation}\label{eq:Smol:add:selfsim}
    \partial_\tau g =
    2 g + z \partial_z g + \tfrac{1}{2} \mathcal{C}_{\add}(g, g) \end{equation}
where $\mathcal{C}_{\add}$ is the coagulation operator in \eqref{eq:Smol} associated to $K(x,y)=x+y$. In that case, one sees that the first moment is conserved
$M_{1}[g(\t)]=M_{1}[g(0)]$ for any $\t >0$ whereas 
$$\dfrac{\d}{\d\tau}M_{2}[g(\tau)]=M_{2}[g(\tau)]\,\left(M_{1}[g(\tau)]-1\right).$$
This means that, if $M_{1}[g(0)]=1$ then both the first 
 and second moments are conserved for solutions to \eqref{eq:Smol:add:selfsim}, i.e.
\begin{equation}\label{eq:add:moments}
M_{1}[g(0)]=1 \Longrightarrow M_{1}[g(\t)]=1, \quad M_{2}[g(\t)]=M_{2}[g(0)], \qquad \forall \tau >0.\end{equation}

\item For the multiplicative case $K(x,y) = x y$ (corresponding to $\gamma=2$), assuming the
  solution $n$ has mass $1$ and initial second moment equal to $1$,
  then we must choose $k = 1$ in order to have the correct gelation time. Hence
  \begin{equation}\label{eq:mult:selfsim}
    t(\tau) = 1 - e^{-\tau},
    \qquad g(\tau, z) := e^{2\tau} n(1 - e^{-\tau}, e^\tau z),
  \end{equation}
  which satisfies then
  \begin{equation}\label{eq:Smol:mult:selfsim}
    \partial_\tau g =
    2 g + z \partial_z g + \mathcal{C}_{\mult}(g, g)
  \end{equation}
where $\mathcal{C}_{\mult}$ is the coagulation operator in \eqref{eq:Smol} associated to $K(x,y)=xy$.
  \end{enumerate}

Throughout this work, we will use the sub- and superscripts $\const$, $\add$
and $\mult$ to denote quantities related to the constant, additive and
multiplicative kernel respectively.

We introduce the following spaces
\begin{equation*}
 \begin{aligned}
  \mathbb{Y}_{\const}&=\left\{g \in L^{1}(\R^{+})\;;\int_{0}^{\infty}g(x)\d x=\int_{0}^{\infty}xg(x)\d x=0, \quad \int_{0}^{\infty}x^{2}|g(x)|\d x <\infty\right\}\\
  \mathbb{Y}_{\add}&=\left\{g \in L^{1}_{\text{loc}}(\R^{+})\;;\int_{0}^{\infty}xg(x)\d x=\int_{0}^{\infty}x^2g(x)\d x=0, \quad \int_{0}^{\infty}x^{3}|g(x)|\d x <\infty\right\}\\
  \mathbb{Y}_{\mult}&=\left\{g \in L^{1}_{\text{loc}}(\R^{+})\;;\int_{0}^{\infty}x^2g(x)\d x=\int_{0}^{\infty}x^3g(x)\d x=0, \quad \int_{0}^{\infty}x^{4}|g(x)|\d x <\infty\right\}.
 \end{aligned}
\end{equation*}
Finally, given $\kappa \in \R$, we define the following
\begin{equation}\label{eq:def:norm}
 \begin{gathered}
 \lnorm{G}_{\kappa}=\sup_{\eta >0}|\eta|^{-\kappa}\,|G(\eta)|\\
 \norm{g}_{\const,\kappa}=\lnorm{\mathcal{L}[g]}_{\kappa},\qquad \norm{g}_{\add,\kappa}=\lnorm{\mathcal{B}[g]}_{\kappa}\qquad \text{and}\qquad \norm{g}_{\mult,\kappa}=\lnorm{\mathcal{B}[xg]}_{\kappa}.
 \end{gathered}
\end{equation}
where $\mathcal{L}$ and $\mathcal{B}$ denote the Laplace and desingularised Laplace (Bernstein) transform, i.e.\@
\begin{equation*}
\mathcal{L}[g](\eta)=\int_{0}^{\infty}e^{-\eta\,x}g(x)\d x\quad \text{and}\quad \mathcal{B}[g](\eta)=\int_{0}^{\infty}(1-e^{-\eta\,x})g(x)\d x  \qquad \eta >0. 
\end{equation*}

We state here a first obvious result where the uniqueness property $\norm{u}=0$ comes from the fact that both $\mathcal{L}$ and $\mathcal{B}$ are one-to-one:
\begin{prop} The following holds
\begin{enumerate}
\item For $\kappa \in [0,2]$, $\norm{\,\cdot\,}_{\const,\kappa}$ is a norm on $\mathbb{Y}_{\const}$.
\item For $\kappa \in [0,3]$, $\norm{\,\cdot\,}_{\add,\kappa}$ is a norm on $\mathbb{Y}_{\add}$ and $\norm{\,\cdot\,}_{\mult,\kappa}$ is a norm on $\mathbb{Y}_{\mult}$.
\end{enumerate}
\end{prop}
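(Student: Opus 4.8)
The plan is to check the four norm axioms, of which only finiteness requires any work. Positivity is immediate since $\lnorm{\cdot}_{\kappa}$ is a supremum of nonnegative quantities; absolute homogeneity and the triangle inequality hold because $\lnorm{\cdot}_{\kappa}=\sup_{\eta>0}|\eta|^{-\kappa}|\cdot(\eta)|$ is absolutely homogeneous and subadditive in its argument, and it is precomposed with the \emph{linear} maps $g\mapsto\mathcal{L}[g]$, $g\mapsto\mathcal{B}[g]$ and $g\mapsto\mathcal{B}[xg]$; and definiteness, $\norm{g}=0\Rightarrow g=0$, is exactly the injectivity of $\mathcal{L}$ and $\mathcal{B}$ recalled in the statement (for $\mathcal{B}$ one differentiates in $\eta$ to reduce to the injectivity of $\mathcal{L}$ applied to $xg$). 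Hence the entire content is that $\norm{g}<\infty$ for every $g$ in the respective space.

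For finiteness I would bound $G(\eta):=\mathcal{L}[g](\eta)$ (constant case) and $G(\eta):=\mathcal{B}[g](\eta)$ (additive case) in three regimes: near $\eta=0$, on compact subsets of $(0,\infty)$, and as $\eta\to\infty$. The behaviour at $\eta=0$ is the crux and is where the prescribed vanishing moments enter. For $g\in\mathbb{Y}_{\const}$ one uses $\int_0^\infty g\d x=\int_0^\infty xg\d x=0$ to write $\mathcal{L}[g](\eta)=\int_0^\infty(e^{-\eta x}-1+\eta x)\,g(x)\d x$, and the elementary inequality $0\le e^{-t}-1+t\le t^2/2$ ($t\ge0$) gives $|\mathcal{L}[g](\eta)|\le\tfrac{\eta^2}{2}\,M_2[|g|]$, so that $|\eta|^{-\kappa}|\mathcal{L}[g](\eta)|\le\tfrac12\eta^{2-\kappa}M_2[|g|]$ stays bounded as $\eta\to0$ precisely when $\kappa\le2$. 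Similarly, for $g\in\mathbb{Y}_{\add}$ one uses $\int_0^\infty xg\d x=\int_0^\infty x^2g\d x=0$ to write $\mathcal{B}[g](\eta)=\int_0^\infty\bigl(1-e^{-\eta x}-\eta x+\tfrac{(\eta x)^2}{2}\bigr)g(x)\d x$ and the companion inequality $0\le1-e^{-t}-t+\tfrac{t^2}{2}\le t^3/6$ to obtain $|\mathcal{B}[g](\eta)|\le\tfrac{\eta^3}{6}M_3[|g|]$, bounded against $|\eta|^{-\kappa}$ near $0$ exactly for $\kappa\le3$. This matching is what pins down the admissible ranges $[0,2]$ and $[0,3]$.

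The other two regimes are soft. On any compact subset of $(0,\infty)$ the transform is continuous and $|\eta|^{-\kappa}$ is bounded, so the supremum there is finite. As $\eta\to\infty$ I would use that each space embeds into $L^1(\R^+)$: local integrability near $0$ together with finiteness of the top moment ($\int_0^\infty x^{\ell}|g|\d x<\infty$ controlling the tail) forces $\int_0^\infty|g|\d x<\infty$. Then $|1-e^{-\eta x}|\le1$ and $|e^{-\eta x}|\le1$ give $|\mathcal{B}[g](\eta)|\le\norm{g}_{L^1}$ and $|\mathcal{L}[g](\eta)|\le\norm{g}_{L^1}$, so $|\eta|^{-\kappa}|G(\eta)|\to0$ for $\kappa>0$ and stays bounded for $\kappa=0$. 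Combining the three regimes yields $\norm{g}<\infty$.

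Finally, the multiplicative case requires no separate computation. Setting $\tilde g:=xg$, the defining conditions of $\mathbb{Y}_{\mult}$ say exactly that $\int_0^\infty x\tilde g\d x=\int_0^\infty x^2\tilde g\d x=0$ and $\int_0^\infty x^3|\tilde g|\d x=M_4[|g|]<\infty$, i.e. $\tilde g$ satisfies the additive hypotheses, while $\norm{g}_{\mult,\kappa}=\lnorm{\mathcal{B}[\tilde g]}_{\kappa}$; thus the additive estimate applies to $\tilde g$ verbatim (with $\tilde g\in L^1(\R^+)$ for the large-$\eta$ bound). The only genuinely delicate point is the small-$\eta$ analysis: one must verify that the two prescribed moments are precisely those that vanish, so that the Taylor remainder gains the extra power of $\eta$ needed to absorb $|\eta|^{-\kappa}$ up to the endpoints $\kappa=2$ and $\kappa=3$; everything else is routine.
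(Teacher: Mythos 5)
Your overall strategy matches what the paper (which leaves this proposition essentially unproved) gestures at: the only axioms requiring work are definiteness, which you correctly reduce to the injectivity of $\mathcal{L}$ and $\mathcal{B}$, and finiteness, whose crux you correctly locate at $\eta\to 0$, where the two vanishing moments and the Taylor--remainder bounds $0\le e^{-t}-1+t\le t^{2}/2$ and $0\le 1-e^{-t}-t+t^{2}/2\le t^{3}/6$ are exactly what fix the upper endpoints $\kappa=2$ and $\kappa=3$. Your reduction of the multiplicative case to the additive one via $\tilde g=xg$ is also the intended one.

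There is, however, a genuine flaw in your large-$\eta$ regime for the additive and multiplicative cases: the claimed embedding $\mathbb{Y}_{\add}\subset L^{1}(\R^{+})$ (and $x\,\mathbb{Y}_{\mult}\subset L^{1}(\R^{+})$) does not follow from your justification and is false for the functions these norms are designed to measure. Membership in $L^{1}_{\mathrm{loc}}(\R^{+})$ controls only compact subsets of $(0,\infty)$, and finiteness of $M_{3}[|g|]$ controls only the tail at infinity; neither gives integrability at the origin. The whole reason the paper replaces $\mathcal{L}$ by the desingularised transform $\mathcal{B}$ in these two cases is that the relevant functions are \emph{not} integrable near $x=0$: the profile $G_{\add}(x)=(2\pi)^{-1/2}x^{-3/2}e^{-x/2}$ lies outside $L^{1}$, and elements of $\mathbb{Y}_{\add}$ may behave like $x^{-3/2}$ as $x\to0$, the moment conditions only guaranteeing $\int_{0}^{\infty}x|g(x)|\d x<\infty$. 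Hence the bound $|\mathcal{B}[g](\eta)|\le\norm{g}_{L^{1}}$ is unavailable. The repair, for the range actually used later in the paper, is straightforward: $|1-e^{-\eta x}|\le\min(1,\eta x)$ gives $|\mathcal{B}[g](\eta)|\le\eta\int_{0}^{1}x|g(x)|\d x+\int_{1}^{\infty}x^{3}|g(x)|\d x$, so $\eta^{-\kappa}|\mathcal{B}[g](\eta)|$ is bounded for $\eta\ge1$ whenever $\kappa\ge1$ --- in particular on $(2,3)$, where Theorems \ref{Thm:additive:kernel} and \ref{Thm:multiplicative:kernel} live. For $\kappa\in[0,1)$ your argument (and, arguably, the proposition as literally stated) needs integrability of $g$ at the origin that the definition of $\mathbb{Y}_{\add}$ does not supply; this endpoint issue is harmless for the rest of the paper, but it should be flagged rather than covered by a false embedding. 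The constant case is unaffected, since $g\in L^{1}(\R^{+})$ is part of the definition of $\mathbb{Y}_{\const}$.
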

\begin{nb} The constraint on the range of $\kappa$ for which the above
  quantities are norms is needed to ensure their finiteness (see
  \citet[Proposition 2.6]{CarrTo} for similar considerations on
  Fourier-like metrics).  Notice that, in the above listed cases, the
  spaces $(\mathbb{Y}_{\const},\norm{\,\cdot\,}_{\const,\kappa})$,
  $(\mathbb{Y}_{\add},\norm{\,\cdot\,}_{\add,\kappa})$ and
  $(\mathbb{Y}_{\mult},\norm{\,\cdot\,}_{\const,\kappa})$ are not
  necessarily Banach spaces. However, we will not need such a property
  in our analysis.
\end{nb}

Similar norms have also been used in \cite{NTV16,Thr17a} to prove the
uniqueness of self-similar profiles for perturbations of the constant
kernel.

Our main results assert the \emph{exponential contractivity} of the
above norm for the difference of two solutions to \eqref{eq:gta} in
the spirit of \citet[Sections 5.2 \& 5.3]{CarrTo}. Our method of proof
differs here from that of \cite{CarrTo}, who investigate contractivity
properties of the operator itself. We rather exploit a simple Duhamel
representation for the difference of two solutions $g_{1},g_{2}$ to
\eqref{eq:gta}.

One important point of the method is that it strongly exploits the
fact that, for all the three solvable kernels considered here,
\emph{two} different moments are conserved by the flow of
solution. This allows to \emph{fix} two such initial moments and work
in the various spaces $\mathbb{Y}_{\mult},$ $\mathbb{Y}_{\const}$ and
$\mathbb{Y}_{\add}$. More precisely, we prove here the following three
statements:

\begin{theo}[\textit{\textbf{constant kernel}}]\label{Thm:constant:kernel}
 Let $n_1(t,x)$ and $n_{2}(t,x)$ be solutions to~\eqref{eq:Smol} with constant kernel $K=2$ such that $n_{\ell}(0,\cdot)\in L^{1}(\R^{+})$ and 
 $$\int_{0}^{\infty}n_{\ell}(0,x)\d x=\int_{0}^{\infty}xn_{\ell}(x)\d x=1, \qquad \int_{0}^{\infty}x^2 n_{\ell}(x)\d x<\infty \qquad \text{ for } \: \ell=1,2.$$ For $\ell=1,2$ let furthermore  $g_{\ell}$ be the rescaling of $n_{\ell}$ as given by~\eqref{eq:const:selfsim}. Then, for each $\kappa\in(1,2]$ we have 
 \begin{equation*}
  \norm*{g_1(\tau,\cdot)-g_2(\tau,\cdot)}_{\const,\kappa}\leq \exp\left(-(\kappa-1)\tau\right)\norm{g_1(0,\cdot)-g_2(0,\cdot)}_{\const,\kappa}, \qquad \forall \tau \geq0.
 \end{equation*}
 In particular, this shows exponential convergence towards the unique self-similar profile $G_{\const}(x)=e^{-x}$ with respect to $\norm{\,\cdot\,}_{\const,\kappa}$.
\end{theo}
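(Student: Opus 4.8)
The plan is to pass to the Laplace side and reduce the quadratic, nonlocal equation \eqref{eq:Smol:const:selfsim} to a scalar first-order equation, whose difference of two solutions is \emph{linear} and can be integrated explicitly along characteristics. Setting $\phi_\ell(\tau,\eta)=\mathcal{L}[g_\ell(\tau,\cdot)](\eta)$ for $\ell=1,2$, I would use that the convolution in $\mathcal{C}_{\const}$ becomes a product under $\mathcal{L}$, that $\mathcal{L}[z\partial_z g_\ell]=-\phi_\ell-\eta\,\partial_\eta\phi_\ell$ (one integration by parts, with no boundary term for finite-mass solutions), and the conserved mass $M_0[g_\ell(\tau)]=\phi_\ell(\tau,0)=1$, to obtain
\begin{equation*}
  \partial_\tau\phi_\ell=-\phi_\ell-\eta\,\partial_\eta\phi_\ell+\phi_\ell^{2}, \qquad \eta>0,\quad \ell=1,2 .
\end{equation*}
The profile $G_{\const}(x)=e^{-x}$ corresponds here to the stationary solution $\mathcal{L}[G_{\const}](\eta)=(1+\eta)^{-1}$.

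I would then set $w:=\phi_1-\phi_2$. Since $\phi_1^{2}-\phi_2^{2}=(\phi_1+\phi_2)\,w$, subtracting the two equations produces the linear transport equation
\begin{equation*}
  \partial_\tau w+\eta\,\partial_\eta w=\bigl(-1+\phi_1+\phi_2\bigr)\,w .
\end{equation*}
Its characteristics are $\eta(\tau)=\eta_0 e^{\tau}$, along which $w$ solves a scalar linear ODE; integrating back from $\eta_0=\eta e^{-\tau}$ gives the Duhamel representation
\begin{equation*}
  w(\tau,\eta)=w\bigl(0,\eta e^{-\tau}\bigr)\,\exp\!\left(-\tau+\int_0^\tau\bigl(\phi_1+\phi_2\bigr)\bigl(s,\eta\, e^{\,s-\tau}\bigr)\,\d s\right) .
\end{equation*}

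The contraction then rests on two elementary facts. First, each $g_\ell$ being nonnegative with unit mass forces $0\leq\phi_\ell(\tau,\eta)\leq\phi_\ell(\tau,0)=1$, so the integral in the exponent is at most $2\tau$ and the exponential factor is bounded by $e^{\tau}$. Second, the weight $\eta^{-\kappa}$ is homogeneous, whence, with $\xi=\eta e^{-\tau}$,
\begin{equation*}
  \eta^{-\kappa}\bigl|w(\tau,\eta)\bigr|\leq e^{\tau}\,\eta^{-\kappa}\bigl|w(0,\eta e^{-\tau})\bigr|=e^{(1-\kappa)\tau}\,\xi^{-\kappa}\bigl|w(0,\xi)\bigr|\leq e^{-(\kappa-1)\tau}\,\lnorm{w(0,\cdot)}_{\kappa} .
\end{equation*}
Taking the supremum over $\eta>0$ yields $\lnorm{w(\tau,\cdot)}_{\kappa}\leq e^{-(\kappa-1)\tau}\lnorm{w(0,\cdot)}_{\kappa}$, which, recalling $\lnorm{w(\tau,\cdot)}_{\kappa}=\norm{g_1(\tau,\cdot)-g_2(\tau,\cdot)}_{\const,\kappa}$, is exactly the asserted contractivity; convergence to $G_{\const}$ is the special case $g_2\equiv G_{\const}$.

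The hypotheses enter precisely where needed: equality of the first two moments forces $w(0,0)=0$ and $\partial_\eta w(0,0)=0$, hence $w(0,\cdot)=O(\eta^{2})$ near the origin, and together with $M_2[g_\ell]<\infty$ this makes $\lnorm{w(0,\cdot)}_{\kappa}$ finite for $\kappa\leq2$, while $\kappa>1$ is exactly the condition turning the rate $-(\kappa-1)$ negative. I expect the main difficulty to lie not in the algebra above but in the rigorous justification of the characteristic representation: one must guarantee enough regularity and decay of $\phi_\ell(\tau,\cdot)$ to kill the boundary terms in the integration by parts and to solve the ODE along characteristics, and one must invoke nonnegativity together with the uniform-in-$\tau$ normalisation $\phi_\ell(\tau,0)=1$ (both recorded in the discussion preceding the theorem) to secure the crucial bound $0\leq\phi_\ell\leq1$ on which the whole argument hinges.
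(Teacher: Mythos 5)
Your proposal is correct and follows essentially the same route as the paper: pass to the Laplace transform, derive the linear transport equation for the difference $u=U_1-U_2$, and exploit $0\le U_\ell(\tau,\eta)\le U_\ell(\tau,0)=1$ (nonnegativity plus the conserved zeroth moment) together with the homogeneity of the weight $\eta^{-\kappa}$ under the dilation $\eta\mapsto\eta e^{-\tau}$, with the same finiteness discussion for $\kappa\in(1,2]$. The only cosmetic difference is that you integrate the linear equation exactly along characteristics and bound the resulting exponent by $\tau$, whereas the paper writes a Duhamel formula with the semigroup $\mathbf{T}_\tau v(\eta)=e^{-\tau}v(\eta e^{-\tau})$ and closes with Gronwall's lemma; both yield the identical rate $e^{-(\kappa-1)\tau}$.
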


\begin{theo}[\textit{\textbf{additive kernel}}]\label{Thm:additive:kernel}
  Let $n_1(t,x)$ and $n_{2}(t,x)$ be solutions to~\eqref{eq:Smol} with additive kernel $K(x,y)=x+y$ such that $n_{\ell}(0,\cdot)\in L^{1}(\R^{+})$ and 
 $$\int_{0}^{\infty}xn_{\ell}(0,x)\d x=\int_{0}^{\infty}x^2 n_{\ell}(0,x)\d x=1, \qquad \int_{0}^{\infty}x^3 n_{\ell}(0,x)\d x<\infty \qquad \text{ for } \ell=1,2.$$ Let $g_{\ell}$ be the corresponding rescaling as specified in~\eqref{eq:add:selfsim}. Then, for each $\kappa\in(2,3)$ we have 
 \begin{equation*}
   \norm*{g_1(\tau,\cdot)-g_2(\tau,\cdot)}_{\add,\kappa}\leq \exp\left(-\tfrac{1}{2}(\kappa-2)\tau\right)\norm{g_1(0,\cdot)-g_2(0,\cdot)}_{\add,\kappa}, \qquad \forall \tau \geq0.
 \end{equation*}
 This shows in particular exponential convergence towards the unique self-similar profile 
 $$G_{\add}(x)=\frac{1}{\sqrt{2\pi}}x^{-3/2}e^{-x/2}$$ with respect to $\norm{\,\cdot\,}_{\add,\kappa}$.  
\end{theo}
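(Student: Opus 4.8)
The plan is to push \eqref{eq:Smol:add:selfsim} through the Bernstein transform, turning it into a scalar first-order PDE, and then to observe that for the \emph{difference} of two solutions this PDE linearises into a pure transport--reaction equation with \emph{no} nonlocal source term. Write $B_\ell(\tau,\eta)=\mathcal{B}[g_\ell(\tau,\cdot)](\eta)$. Integration by parts gives $\mathcal{B}[z\partial_z g]=-B-\eta\,\partial_\eta B$, while the weak form of the coagulation operator tested against $\phi(x)=1-e^{-\eta x}$, using $\phi(x+y)-\phi(x)-\phi(y)=-(1-e^{-\eta x})(1-e^{-\eta y})$ and the symmetry of $K(x,y)=x+y$, yields $\mathcal{B}[\mathcal{C}_{\add}(g,g)]=-\mathcal{B}[xg]\,\mathcal{B}[g]$. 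Since $M_1[g]=1$ forces $\mathcal{B}[xg]=1-\partial_\eta B$, the equation \eqref{eq:Smol:add:selfsim} becomes
\begin{equation*}
 \partial_\tau B = \tfrac12 B - \eta\,\partial_\eta B + \tfrac12\,B\,\partial_\eta B ,
\end{equation*}
and one checks directly that $\mathcal{B}[G_{\add}](\eta)=\sqrt{1+2\eta}-1$ is its unique stationary solution compatible with $M_1=M_2=1$.

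\textbf{The linearised equation.} Next I would subtract this identity for two solutions. With $w:=B_1-B_2=\mathcal{B}[g_1-g_2]$, the quadratic term splits as $B_1\partial_\eta B_1-B_2\partial_\eta B_2=B_1\,\partial_\eta w+(\partial_\eta B_2)\,w$, so that $w$ solves the \emph{linear} equation
\begin{equation*}
 \partial_\tau w = \Bigl(-\eta+\tfrac12 B_1\Bigr)\partial_\eta w + \tfrac12\bigl(1+\partial_\eta B_2\bigr)\,w .
\end{equation*}
The essential structural point is that no antiderivative of $w$ appears: the entire transport is carried by the $\partial_\eta w$ term and can be absorbed along characteristics. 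Setting $w=\eta^{\kappa}u$, so that $\norm{g_1-g_2}_{\add,\kappa}=\lnorm{w}_{\kappa}=\sup_{\eta>0}\abs{u}$, the field $-\eta+\tfrac12 B_1$ is unchanged and the reaction coefficient becomes
\begin{equation*}
 R(\tau,\eta) = -\kappa + \frac{\kappa\,B_1}{2\eta} + \frac12 + \frac12\,\partial_\eta B_2 .
\end{equation*}

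\textbf{Where the two conserved moments enter.} Because $g_1,g_2\geq 0$ and $M_1[g_\ell]=1$, the elementary inequalities $1-e^{-\eta x}\leq\eta x$ and $x e^{-\eta x}\leq x$ give $B_1(\eta)\leq\eta$ and $\partial_\eta B_2(\eta)\leq 1$ for every $\eta>0$, whence $R(\tau,\eta)\leq-\kappa+\tfrac{\kappa}{2}+\tfrac12+\tfrac12=-\tfrac12(\kappa-2)$, uniformly in $\tau$ and $\eta$. I would then conclude by a maximum principle along the characteristics $\dot\eta=\eta-\tfrac12 B_1(\tau,\eta)$, along which $\tfrac{\d}{\d\tau}u=R\,u$, so that $\abs{u}$ decays at rate at least $\tfrac12(\kappa-2)$. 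Since $\tfrac{\eta}{2}\leq\dot\eta\leq\eta$ (again from $B_1\leq\eta$), these characteristics are increasing, remain in $(0,\infty)$ and sweep out the whole half-line, giving $\sup_{\eta}\abs{u(\tau,\cdot)}\leq e^{-\frac12(\kappa-2)\tau}\sup_{\eta}\abs{u(0,\cdot)}$, i.e.\@ the claimed contraction. Taking $g_2=G_{\add}$ (a stationary solution) then yields the exponential convergence to the profile.

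\textbf{Main obstacle.} I expect the difficulty to lie not in the algebra but in the rigorous justification of this characteristic argument: one must verify that the time-dependent, solution-dependent field $\eta\mapsto-\eta+\tfrac12 B_1(\tau,\eta)$ is regular enough (its $\eta$-derivative is $-1+\tfrac12\partial_\eta B_1$, bounded since $\partial_\eta B_1\leq M_1=1$) for the flow to be well defined and to realise a bijection of $(0,\infty)$, and that the transform computations and the interchanges of differentiation and integration are legitimate for the solutions under consideration. One must also ensure $\lnorm{w(\tau)}_{\kappa}<\infty$, i.e.\@ that $\mathcal{B}[g_1-g_2]$ vanishes to order $\eta^3$ at the origin; this holds because $M_1[g_1-g_2]=M_2[g_1-g_2]=0$, which is exactly why fixing both conserved moments in \eqref{eq:add:moments} is indispensable and why the range $\kappa\in(2,3)$ is the natural one.
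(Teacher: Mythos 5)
Your proposal is correct and follows essentially the same route as the paper: pass to the Bernstein transform, observe that the difference $w=B_1-B_2$ satisfies a linear transport--reaction equation with drift $-\eta+\tfrac12 B_1$, and close the estimate along characteristics using precisely the two bounds $B_1(\tau,\eta)\leq\eta$ and $\partial_\eta B_2(\tau,\eta)\leq 1$ that follow from the conserved first moment. The only (cosmetic) difference is that you conjugate by $\eta^{\kappa}$ and absorb both effects into a single reaction coefficient $R\leq-\tfrac12(\kappa-2)$, whereas the paper keeps them separate, first extracting the factor $e^{\tau}$ and then the factor $\bigl(|X(0;\tau,\eta)|/\eta\bigr)^{\kappa}\leq e^{-\kappa\tau/2}$ from the backward characteristics.
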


For the multiplicative kernel, one can resort to a well-known change
of variables linking solutions to \eqref{eq:mult:selfsim} to the
solutions to \eqref{eq:add:selfsim} to deduce from Theorem
\ref{Thm:additive:kernel} the following
\begin{theo}[\textit{\textbf{multiplicative kernel}}]\label{Thm:multiplicative:kernel}
Let $n_{1}(t,x)$ and $n_{2}(t,x)$ be solutions to~\eqref{eq:Smol} with multiplicative kernel $K(x,y)=xy$ such that $n_{\ell}(0,\cdot)\in L^{1}(\R^{+})$ and 
$$\int_{0}^{\infty}x^2n_{\ell}(0,x)\d x=\int_{0}^{\infty}x^3 n_{\ell}(0,x)\d x=1 \qquad \int_{0}^{\infty}x^4 n_{\ell}(0,x)\d x<\infty \quad \text{ for } \ell=1,2.$$ Let $g_{\ell}$ be the rescaling of $n_{\ell}$ as given in~\eqref{eq:mult:selfsim}. Then, for each $\kappa\in(2,3)$ we have 
 \begin{equation*}
  \norm*{g_1(\tau,\cdot)-g_2(\tau,\cdot)}_{\mult,\kappa}\leq \exp\left(-\tfrac{1}{2}(\kappa-2)\tau\right)\norm{g_1(0,\cdot)-g_2(0,\cdot)}_{\mult,\kappa} \qquad \forall \tau\geq0.
 \end{equation*}
 In particular, this proves the convergence towards the unique self-similar profile 
 $$G_{\mult}(x)=\frac{1}{\sqrt{2\pi}}x^{-5/2}e^{-x/2}$$ with respect to $\norm{\,\cdot\,}_{\mult,\kappa}$.  
\end{theo}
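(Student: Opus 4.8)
The plan is to reduce Theorem~\ref{Thm:multiplicative:kernel} to the already established additive case, Theorem~\ref{Thm:additive:kernel}, by exploiting the classical substitution that turns the multiplicative kernel into the additive one. At the level of the original equation \eqref{eq:Smol}, for a solution $n$ with $K(x,y)=xy$ one sets $m(t,x)=x\,n(t,x)$; using $K=xy$ a direct computation gives the pointwise identity $x\,\mathcal C_{\mult}(n,n)=\mathcal C_{\add}(m,m)+m\,M_1[m]$, with $M_1[m]=M_2[n]$, so that $m$ satisfies the additive coagulation equation up to the extra loss term $m\,M_1[m]$. Because mass is conserved for the additive flow, this term can be absorbed through an explicit amplitude factor $\beta(t)$ and a time change $\theta=\theta(t)$, producing a genuine finite-mass solution $\mu$ of the additive Smoluchowski equation. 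First I would record this transformation precisely and verify that it carries the multiplicative normalisation $\int x^2 n=\int x^3 n=1$ onto the normalisation $\int x\,\mu=\int x^2\mu=1$ demanded in Theorem~\ref{Thm:additive:kernel}, using $M_1[x g]=M_2[g]$ and $M_2[xg]=M_3[g]$.

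Next I would push the substitution through the two self-similar rescalings \eqref{eq:mult:selfsim} and \eqref{eq:add:selfsim}. Composing the change of variables with these rescalings yields an explicit relation of $g_{\add}$ to $g_{\mult}$ of the form $g_{\add}(\tau',z)=A(\tau)\,z\,g_{\mult}(\tau,\sigma(\tau)z)$, with a prescribed link between the self-similar times $\tau'$ and $\tau$ and a spatial dilation $\sigma(\tau)$. The decisive tool is the way the norms in \eqref{eq:def:norm} interact with this relation: by definition one has the isometry-type identity $\norm{g}_{\mult,\kappa}=\lnorm{\mathcal B[xg]}_\kappa=\norm{xg}_{\add,\kappa}$, so that applying $\mathcal B$ to the identity above and performing the conjugate change of the transform variable $\eta$ converts $\norm{g_{\mult}(\tau)}_{\mult,\kappa}$ into an exponential factor times $\norm{g_{\add}(\tau')}_{\add,\kappa}$.

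With these relations in place I would invoke Theorem~\ref{Thm:additive:kernel} for the pair $g_{\add,1},g_{\add,2}$, whose difference lies in $\mathbb Y_{\add}$ since the two fixed moments are shared by the two normalised solutions; this gives contraction at rate $\tfrac12(\kappa-2)$ in $\tau'$, which I would then transport back to the variable $\tau$. The limit profile is identified by the same correspondence: the additive profile $G_{\add}(x)=\tfrac{1}{\sqrt{2\pi}}x^{-3/2}e^{-x/2}$ maps under division by $x$ to $G_{\mult}(x)=\tfrac{1}{\sqrt{2\pi}}x^{-5/2}e^{-x/2}$.

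The step I expect to be the main obstacle is the bookkeeping in the second and third paragraphs: I must check that the exponential factors produced by the conjugate rescaling of $\eta$ inside $\lnorm{\cdot}_\kappa$ combine exactly with the additive rate $\tfrac12(\kappa-2)$ and with the time reparametrisation so as to yield precisely the claimed multiplicative rate $\tfrac12(\kappa-2)$ in $\tau$, with no residual growth; getting the constants in the time change and in $\sigma(\tau)$ right is where the argument is delicate. A secondary subtlety is that the multiplicative profile has infinite mass, $M_1[G_{\mult}]=\infty$, so the loss term of $\mathcal C_{\mult}$ is meaningful only after the desingularising transform $\mathcal B$; the identity $z\,\mathcal C_{\mult}(g,g)=\mathcal C_{\add}(zg,zg)+(zg)\,M_1[zg]$ must therefore be justified in transform variables rather than pointwise.
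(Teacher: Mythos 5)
Your overall strategy---reducing to the additive kernel via $m(t,x)=x\,n(t,x)$ and the classical time change $s=\log\frac{1}{1-t}$---is the same as the paper's, and your identity $x\,\mathcal{C}_{\mult}(n,n)=\mathcal{C}_{\add}(m,m)+m\,M_{1}[m]$ is correct. But the step you yourself flag as the delicate one is where the plan actually breaks. Carrying out the bookkeeping: combining $n_{\mult}(t,x)=\frac{1}{(1-t)x}\,n_{\add}\bigl(\log\frac{1}{1-t},x\bigr)$ with the rescalings \eqref{eq:mult:selfsim} and \eqref{eq:add:selfsim} gives the \emph{exact} relation
\begin{equation*}
  z\,g_{\mult}(\tau,z)=e^{-2\tau}\,g_{\add}(2\tau,e^{-\tau}z),
\end{equation*}
so the additive self-similar time is $\tau'=2\tau$ and there is a genuine dilation $\sigma(\tau)=e^{\tau}$. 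Passing to Bernstein transforms, $\mathcal{B}[z\,g_{\mult}(\tau,\cdot)](\eta)=e^{-\tau}\,\mathcal{B}[g_{\add}(2\tau,\cdot)](e^{\tau}\eta)$, and hence the exact norm identity
\begin{equation*}
  \norm{g_{\mult,1}(\tau,\cdot)-g_{\mult,2}(\tau,\cdot)}_{\mult,\kappa}
  =e^{(\kappa-1)\tau}\,\norm{g_{\add,1}(2\tau,\cdot)-g_{\add,2}(2\tau,\cdot)}_{\add,\kappa}.
\end{equation*}
Feeding Theorem~\ref{Thm:additive:kernel} into the right-hand side yields $e^{(\kappa-1)\tau}\,e^{-(\kappa-2)\tau}=e^{\tau}$, i.e.\ growth, not the claimed decay $e^{-\frac12(\kappa-2)\tau}$. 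The residual factors do not cancel, and since you use the additive rate as a black box there is no slack left to recover: your reduction proves a strictly weaker estimate than the theorem.

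The paper does not transport the \emph{conclusion} of Theorem~\ref{Thm:additive:kernel}; it transports the \emph{argument}. It works with $h(\tau,z)=z\,g_{\mult}(\tau,z)$ and $U(\tau,\eta)=\mathcal{B}[h(\tau,\cdot)](\eta)$ directly in the multiplicative self-similar time $\tau$ (no time doubling, no $\eta$-dilation), asserts that $h$ solves the additive self-similar equation so that the second and third moments of the difference of two solutions vanish, and then reruns verbatim the characteristics estimate of Section~\ref{sec:add} with the first moment replaced by the second. To repair your write-up you must do the same: derive the equation satisfied by $h$ (equivalently by $U$) in the variable $\tau$ and repeat the additive characteristics argument there. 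When you do, check the constant in front of $\mathcal{C}_{\add}$ carefully: your own identity $z\,\mathcal{C}_{\mult}(g,g)=\mathcal{C}_{\add}(zg,zg)+(zg)\,M_{1}[zg]$ together with $M_{2}[g_{\mult}]\equiv 1$ gives $\partial_{\tau}h=2h+z\partial_{z}h+\mathcal{C}_{\add}(h,h)$, i.e.\ coefficient $1$ rather than the $\tfrac12$ of \eqref{eq:Smol:add:selfsim} (equivalently, $M_{3}[g_{\mult}(\tau)]$ is \emph{not} conserved); reconciling this with the stated rate is precisely the nontrivial content of the multiplicative case, and it is not settled either by your conjugation argument or by quoting Theorem~\ref{Thm:additive:kernel}.
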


The difference in the range of parameters for which each of the above
results holds is due to two different restrictions. The upper bound on
the allowed $\kappa$ is due to the choice of the conserved moments,
and ensures the finiteness of the respective Laplace-based norm
(recall for instance that,
$\norm{g_{1}(\tau)-g_{2}(\tau)}_{\const,\kappa} < \infty$ for
$\kappa \in (0,2]$ whereas
$\norm{g_{1}(\tau)-g_{2}(\tau)}_{\add,\kappa} < \infty$ for
$\kappa \in (0,3]$). More interestingly, the lower bound on the range
of $\kappa$ comes from the different behaviour of the semigroup
associated with the shifted drift operator
$g \mapsto z\partial_{z}g + 2g$ in the various spaces
$\mathbb{Y}_{\add},\mathbb{Y}_{\const},\mathbb{Y}_{\mult}.$

\subsection{Organization of the paper}

After this Introduction, Section \ref{sec:const} is devoted to the
proof of Theorem \ref{Thm:constant:kernel}, Section \ref{sec:add} is
devoted to the proof of Theorem \ref{Thm:additive:kernel} and Section
\ref{sec:mult} to that of Theorem \ref{Thm:multiplicative:kernel}.

\section{Proof for the constant kernel}\label{sec:const}

\begin{proof}[Proof of Theorem~\ref{Thm:constant:kernel}]
  Let $n$ be a solution to~\eqref{eq:Smol} with constant kernel $K=2$
  and let $g$ be the rescaled solution according
  to~\eqref{eq:const:selfsim}. It is easy to check that the
  corresponding Laplace transform
 \begin{equation*}
  N(t,\lambda)=\int_{0}^{\infty}\exp(-\lambda\,t)n(t,x)\d x, \qquad \lambda \in \R
 \end{equation*}
 satisfies the equation
 \begin{equation*}
   \partial_{t}N(t,\lambda)= N^{2}(t,\lambda)-2N(t,0)
   N(t,\lambda),
   \qquad\lambda \geq 0.
 \end{equation*}
Taking the limit $\lambda\to 0$ this yields the relation
\begin{equation*}
 \partial_{t}N(t,0)=-N^{2}(t,0), \qquad t \geq 0
\end{equation*}
where $N(t,0)=\int_{0}^{\infty}n(t,x)\d x$ is the moment of order zero. By assumption we have 
\begin{equation*}
 \int_{0}^{\infty}n(0,x)\d x=\int_{0}^{\infty}xn(0,x)\d x=1
\end{equation*}
which yields
\begin{equation}\label{eq:const:kernel:moment:zero}
 N(t,0)=\frac{1}{t+1} \qquad \forall t \geq 0.
\end{equation}
One directly verifies that 
\begin{equation*}
 \int_{0}^{\infty}g(\tau,z)\d z=e^{\tau}\int_{0}^{\infty}n(e^\tau-1,x)\d x\quad \text{and}\quad \int_{0}^{\infty}zg(\tau,z)\d z=\int_{0}^{\infty}xn(e^\tau-1,x)\d x\equiv 1.
\end{equation*}
Together with~\eqref{eq:const:selfsim} and~\eqref{eq:const:kernel:moment:zero} this gives
 \begin{equation}\label{eq:normalisation}
\int_{0}^{\infty}g(\tau,z)\d z=\int_{0}^{\infty}zg(\tau,z)\d z=1, \qquad \forall \tau \geq0 
\end{equation}
Denoting 
\begin{equation*}
 U(\tau,\eta)=\int_{0}^{\infty}g(\tau,z)\exp(-\eta z)\d z.
\end{equation*}
we have the relation
\begin{equation*}
 U(\tau,\eta)=e^{\tau}N(e^{\tau}-1,\eta e^{-\tau})
\end{equation*}
and $U$ satisfies the equation
\begin{equation}\label{eq:Up}
\partial_{\tau}U(\tau,\eta)+\eta\,\partial_{\eta}U(\tau,\eta)+U(\tau,\eta)=U^{2}(\tau,\eta)
\end{equation}
with initial datum $U_{0}(\eta)=U(0,\eta)$ while we also exploit that
$U(\tau,0)=\int_{0}^{\infty}g(\tau,z)\d z=1$ according
to~\eqref{eq:normalisation}.  For two solutions $n_1$ and $n_2$ with
rescalings $g_1$ and $g_2$ and corresponding Laplace transforms $U_1$
and $U_2$ we introduce $u(\tau,\eta)=U_1(\tau,\eta)-U_2(\tau,\eta)$
which solves the equation
\begin{equation}\label{up}
  \partial_{\tau}u(\tau,\eta)+\eta\,\partial_{\eta}u(\tau,\eta)+u(\tau,\eta)
  =u(\tau,\eta)\left( U_1(\tau,\eta)+U_2(\tau,\eta) \right)\end{equation}
We define the semigroup 
\begin{equation*}
 \mathbf{T}_{\tau}v(\eta)=e^{-\tau}v(\eta e^{-\tau})
\end{equation*}
so that 
\begin{equation}\label{eq:Duh1}
  u(\tau,\eta) =
  \mathbf{T}_{\tau}u_{0}(\eta)
  + \int_{0}^{\tau}\mathbf{T}_{\tau-s} \left[
    u(s,\cdot) \big( U_1(s,\cdot)+U_2(\tau, \cdot) \big)
  \right](\eta)\d s.
\end{equation}
One easily checks that 
\begin{equation*}
 \lnorm{\mathbf{T}_{\tau}v}_{\kappa}=\exp(-(1+\kappa)\tau)\lnorm{v}_{\kappa}
\end{equation*}
and thus, for the solution to \eqref{eq:Duh1}
\begin{equation*}
  \lnorm{u(\tau)}_{\kappa} \leq
  e^{-(1+\kappa)\tau}\lnorm{u_{0}}_{\kappa}
  + \int_{0}^{\tau}\, e^{-\left(1+\kappa\right)(\tau-s)}
  \lnorm{ u(s,\cdot) \big(U_1(s,\cdot)+U_2(\tau,\cdot)\big) }_{\kappa}\d s.
\end{equation*}
Since $U_{\ell}(\tau,\eta)\leq U(\tau,0)=1$ one has
\begin{equation*}
 \left|U_1(s,\eta)+U_2(s,\eta)\right| \leq 2 \qquad \forall \eta >0
\end{equation*}
so that
\begin{equation*}
  \lnorm{u(s,\cdot) \big( U(s,\cdot)+U_2(s,\cdot) \big)}_{\kappa}
  \leq 2\lnorm{u(s,\cdot)}_{\kappa}
\end{equation*}
and therefore
\begin{equation*}
  \lnorm{u(\tau)}_{\kappa} \leq
  e^{-(1+\kappa)\tau} \lnorm{u_{0}}_{\kappa}
  + 2\int_{0}^{\tau} \, e^{-\left(1+\kappa\right)(\tau-s)} \lnorm{u(s)}_{\kappa}\d s. 
\end{equation*}
From Gronwall's lemma applied to $w(t) := \lnorm{u(\tau)}_k
e^{(1+\kappa) \tau}$,
\begin{equation*}
\lnorm{u(\tau)}_{\kappa} \leq e^{(1-\kappa)t}\lnorm{u(0)}_{\kappa} 
\end{equation*}
so that exponential convergence holds for $\kappa > 1$.  Notice that
~\eqref{eq:normalisation} yields the relation
$u(t,0)=-\partial_{\eta}u(t,0)=0$ which ensures that
\begin{equation*}
\lnorm{u(t)}_{\kappa} < \infty \qquad \text{for $\kappa \in (0,2]$}
\end{equation*}
which gives contractivity for all $\kappa \in (1,2]$ and thus finishes the proof.
\end{proof}

\section{Proof for the additive kernel}\label{sec:add}

\begin{proof}[Proof of Theorem~\ref{Thm:additive:kernel}]

  Let $n$ be a solution to~\eqref{eq:Smol} with additive kernel
  $K(x,y)=x+y$ and $g$ the corresponding rescaled solution according
  to~\eqref{eq:add:selfsim} such that $\int_{0}^{\infty}xn(t,x)\d x=1$
  for all $t\geq 0$ (note that mass is conserved). Let us denote by
  $N$ and $U$ the corresponding desingularised Laplace (Bernstein)
  transforms, i.e.\@
\begin{equation*}
N(t,\lambda)=\int_{0}^{\infty}(1-e^{-x\lambda})n(t,x)\d x\quad \text{and}\quad U(\tau,\eta) = \int_{0}^{\infty}(1 - e^{-\eta z}) g(\tau,z)\d z,
\qquad \eta \in \R.
\end{equation*}
One easily checks that $N$ satisfies the equation
\begin{equation*}
 \partial_{t}N(t,\lambda)=-N(t,\lambda)+N(t,\lambda)(\partial_{\lambda}N)(t,\lambda),
\end{equation*}
since the total mass is normalised to one. In Laplace variables, the rescaling~\eqref{eq:add:selfsim} translates into $U(\t,\eta)=e^{\t}N(\tfrac{1}{2}\t,e^{-\t}\eta)$ such that $U$ solves
\begin{equation*}
\partial_{t}U(t,\eta)
=\tfrac{1}{2}\left[(U - 2 \eta) \partial_\eta U + U\right] \qquad \eta \geq 0.
\end{equation*}

Recall that, due to the choice of the initial condition, the equation
~\eqref{eq:add:selfsim} preserves first and second moments (see
\eqref{eq:add:moments}).  Let now $n_1$ and $n_2$ be two solutions
with corresponding rescalings $g_1$ and $g_2$ normalised according to
Theorem~\ref{Thm:additive:kernel} which yields
\begin{gather*}
  \int_0^\infty z g_1(\tau,z) \d z =   \int_0^\infty z g_2(\tau,z) \d z = 1
  \qquad \text{for all $\tau \geq 0$,}
  \\
  \int_0^\infty z^2 g_1(\tau,z) \d z =   \int_0^\infty z^2 g_2(\tau,x) \d z = 1
  \qquad \text{for all $t \geq 0$}.
\end{gather*}
Let $U_1$ and $U_2$ be the associated Bernstein transforms which consequently satisfy
\begin{gather*}
  U_1(\t,0) = U_2(\t,0) = 0
  \qquad \text{for all $\t \geq 0$,}
  \\
  \partial_\eta U_1(\t,0) = \partial_\eta U_2(\t,0) = 1
  \qquad \text{for all $\t \geq 0$,}
  \\
  \partial_\eta^2 U_1(\t,0) = \partial_\eta^2 U_2(\t,0) = -1
  \qquad \text{for all $t \geq 0$.}
\end{gather*}
Let $u(\t, \eta) = U_1(\tau,\eta)-U_2(\t, \eta)$ be the
difference of $U_1$ and $U_2$ which solves
\begin{equation}
  \label{up1}
  2\partial_{\t} u
  =
  (U_1 - 2 \eta) \partial_\eta u + u \partial_\eta U_2 + u.
\end{equation}
In order to rewrite \eqref{up1} we view it as an equation for $u$,
with coefficients which depend on $U_1$, $U_2$. We define the
characteristic curves $\tau \mapsto X(\t; \t_0, \eta_0)$ as the
solution to the ordinary differential equation
\begin{equation*}
  \frac{\mathrm{d}}{\mathrm{d}\tau}
  X := -\tfrac{1}{2}(U_1(\t, X) - 2 X),
  \qquad X(\t_0) = \eta_0.
\end{equation*}
The solution $u$ to \eqref{up1} can then be written as
\begin{equation*}
  u(\t,\eta) = u_0(X(0; \t, \eta)) \exp\left(\frac{1}{2}
    \int_0^{\t} (1 + \partial_\eta U_2(s, X(s; \t, \eta))) \d s.
  \right)
\end{equation*}
Using that
\begin{equation*}
  \partial_\eta U_2(\t, \eta)
  = \int_0^\infty z g_2(\t,z) e^{-\eta z} \d z
  \leq \int_0^\infty z g_2(\t,z) \d z  = 1
\end{equation*}
we have
\begin{equation*}
  u(\t,\eta) \leq u_0(X(0; \t, \eta)) e^\tau.
\end{equation*}
Hence,
\begin{multline*}
  \lnorm{ u(\t,\cdot)}_\kappa
  \leq e^{ \t} \sup_{\eta > 0} \frac{|u_0(X(0; \t,
    \eta))|}{|\eta|^\kappa}
  \\
  = e^{\t} \sup_{\eta > 0}
  \frac{|u_0(X(0; \t, \eta))|}{|X(0; \t, \eta)|^\kappa}
  \frac{|X(0; \t, \eta)|^\kappa}{|\eta|^\kappa}
  \leq \exp((1 - \frac{1}{2}\kappa) \t) \lnorm{u_0}_\kappa,
\end{multline*}
since
\begin{equation*}
  |X(0; \t, \eta)| \leq \eta e^{-\t/2}.
\end{equation*}
This last inequality can be seen as follows: using that
\begin{equation*}
  U_1(\t, \eta)
  =
  \int_{0}^{\infty}(1 - \exp(-\eta\,z)) g_1(\t,z) \d z
  \leq
  \int_{0}^{\infty} \eta z g_1(\t,z) \d z = \eta
\end{equation*}
we have
\begin{equation*}
  \frac{\mathrm{d}}{\mathrm{d}s}
  X(s; \t, \eta)
  =
  -\tfrac{1}{2}(U_1(\t, X(s; \t,\eta)) - 2 X(s; \t,\eta))
  \geq
  \tfrac{1}{2}X(s; \t,\eta),  
\end{equation*}
and hence
\begin{equation*}
  X(\t; \t, \eta) \geq X(0; \t,\eta) e^{\tfrac{1}{2}\t},
\end{equation*}
that is,
\begin{equation*}
  X(0; \t,\eta)\leq \eta e^{-\frac{1}{2}\t}.
\end{equation*}
We finally obtain
\begin{equation*}
  \lnorm{ u(\t,\cdot) }_\kappa
  \leq \exp((1 - \frac{1}{2}\kappa) \t) \lnorm{u_0}_\kappa,
\end{equation*}
which gives a contractivity for $2 < \kappa < 3$.
\end{proof}

\section{Proof for the multiplicative kernel}\label{sec:mult}

\begin{proof}[Proof of Theorem~\ref{Thm:multiplicative:kernel}]

  It is well known (see e.g.\@ \cite{MeP04}) that the choice
  $\int_{0}^{\infty}x^2 n(0,x)\d x=1$ fixes the gelation time to
  $T_{*}=1$ and additionally that solutions $n_{\add}$ and $n_{\mult}$
  to Smoluchowski's coagulation equation for the additive and
  multiplicative kernel respectively are related by the change of
  variables
  \begin{equation*}
    n_{\mult}(t,x)=\frac{1}{(1-t)x}n_{\add} \left(
      \log \Big(\frac{1}{1-t} \Big),x
    \right).
  \end{equation*}
  If we switch to self-similar variables the corresponding solutions
  $g_{\add}$ and $g_{\mult}$ respectively, satisfy the following
  relation:
\begin{equation*}
 g_{\mult}(\tau,z)=\frac{1}{z}g_{\add}(\tau,z).
\end{equation*}
Note that this change also transforms the time domain for $g_{\mult}$ to $(0,\infty)$.  We thus obtain that $z\,g_{\mult}(\tau,z)$ satisfies~\eqref{eq:Smol:add:selfsim} and consequently the second and third moment are preserved (since the second moment has been chosen to be one). 

We denote now by $U(\tau,\eta)=\int_{0}^{\infty}(1-e^{-\eta z})zg_{\mult}(z)\d z$ and for two solutions $g_{1,\mult}$ and $g_{2,\mult}$ we denote the difference $u(\tau,\eta)=U_{1}(\tau,\eta)-U_{2}(\tau,\eta)$. Thus, arguing exactly as for the additive kernel (where now the first moment is replaced by the second one) we again obtain
\begin{equation*}
  \lnorm{ u(\tau,\cdot) }_\kappa
  \leq \exp((1 - \tfrac{1}{2}\kappa) \tau) \lnorm{u_0}_\kappa,
\end{equation*}
i.e.\@  a contractivity for $2 < \kappa < 3$. We also note, that the rate of convergence to the self-similar profile at gelation time in the original time variable $t=1-e^{-\tau}$ is given by $(1-t)^{\kappa-2}$ as $t\to 1=T_{*}$. 
\end{proof}

\section*{Acknowledgements}
\label{ack}

We would like to thank José A.~Carrillo for suggesting the approach
using Fourier-based distances a long time ago. JAC and ST were
supported by project MTM2017-85067-P, funded by the Spanish government
and the European Regional Development Fund. ST has been funded by the
Deutsche Forschungsgemeinschaft (DFG, German Research Foundation) –
Projektnummer 396845724. BL gratefully acknowledges the financial
support from the Italian Ministry of Education, University and
Research (MIUR), ``Dipartimenti di Eccellenza'' grant 2018-2022. The
authors would like to acknowledge the support of the Hausdorff
Institute for Mathematics, since this work was started as a result of
their stay at the 2019 Trimester Program on kinetic theory.

\bibliographystyle{plainnat-linked}

\bigskip
\bigskip

\end{document}